\crefname{assumption}{Hypothesis}{Hypotheses}
\newacronym{nlp}{NLP}{nonlinear programming}
\newacronym{kkt}{KKT}{Karush-Kuhn-Tucker}
\newacronym{ipm}{IPM}{interior-point method}
\newacronym{cpu}{CPU}{central processing units}
\newacronym{gpu}{GPU}{graphics processing units}
\newacronym{mpc}{MPC}{model predictive control}
\newacronym{ac}{AC}{alternating current}
\newacronym{opf}{OPF}{optimal power flow}
\newacronym{hpc}{HPC}{high-performance computing}
\newacronym{pcg}{PCG}{projected conjugate gradient}
\newacronym{alm}{ALM}{augmented Lagrangian method}
\newacronym{der}{DER}{distributed energy resource}
\newacronym{derms}{DERMS}{distributed energy resource management system}
\newacronym{bess}{BESS}{battery energy storage system}
\newacronym{iso}{ISO}{independent system operator}
\newacronym{simd}{SIMD}{single instruction, multiple data}
\newacronym{spd}{SPD}{symmetric positive definite}
\newacronym{sqd}{SQD}{symmetric quasi-definite}
\newacronym{sqp}{SQP}{sequential quadratic programming}
\newacronym{ncl}{NCL}{nonlinearly constrained Lagrangian}
\newacronym{bcl}{BCL}{bound-constrained Lagrangian}
\newacronym{licq}{LICQ}{linear independence constraint qualifications}
\DeclareMathOperator{\diag}{\mathrm{diag}}
\DeclareMathOperator{\Inertia}{\mathrm{In}}
\newcommand{\tiptop}{\top\!}
\newcommand{\Lagr}{\mathcal{L}}
\newcommand{\R}{\mathbb{R}}
\newcommand{\half}{\frac{1}{2}}
\newcommand{\bmat}[1]{\begin{bmatrix}#1\end{bmatrix}} 
\newcommand{\ldlt}{$\mathrm{LDL^T}\,$}
\newcommand{\lblt}{$\mathrm{LBL^T}\,$}
\patchcmd{\SetTagPlusEndMark}{$}{}{}{}
\patchcmd{\SetTagPlusEndMark}{$}{}{}{}
\newcommand{\minim}{\mathop{\mathrm{minimize}}}
\newcommand{\minimize}[1]{{\displaystyle\minim_{#1}}}
\newcommand{\norm}[1]{\|#1\|}
\newcommand{\T}{^T\!}
\newcommand{\BCk} {\mbox{BC$_k$}\xspace}
\newcommand{\NCk} {\mbox{NC$_k$}\xspace}
\newcommand{\problem}[4]{\fbox
   {\begin{tabular*}{0.84\textwidth}
    {@{}l@{\extracolsep{\fill}}l@{\extracolsep{6pt}}%
        l@{\extracolsep{\fill}}c@{}}
      #1 & $\minimize{#2}$ & $#3$ & $ $ \\[5pt]
         & subject to  & $#4$ & $ $
    \end{tabular*}}}
\title{MadNCL: A GPU Implementation of 
Algorithm NCL for Large-Scale, Degenerate Nonlinear Programs}
\author{Alexis Montoison \and François Pacaud \and Michael Saunders \and Sungho Shin \and Dominique Orban}
\date{\today}
\begin{document}
\maketitle

\begin{abstract}
We present a GPU implementation of Algorithm NCL, an augmented Lagrangian method
for solving large-scale and degenerate nonlinear programs. Although
interior-point methods
and sequential quadratic programming are widely used for solving nonlinear programs,
the augmented Lagrangian method is known to offer superior robustness against constraint degeneracies and can rapidly detect infeasibility.
We introduce several enhancements to Algorithm NCL, including fusion of the inner and outer loops and use of extrapolation steps, which improve both efficiency and convergence stability.
Further,  NCL  has the key advantage of being well-suited for GPU
architectures because of the regularity of the
KKT systems
provided by quadratic penalty terms. In particular, the NCL
subproblem
formulation allows the KKT 
systems to be naturally expressed as either
stabilized or condensed KKT systems, whereas 
the interior-point approach
requires aggressive reformulations or relaxations to make it suitable for GPUs. 
Both systems can be efficiently solved on GPUs 
using sparse \ldlt factorization
with static pivoting, as implemented in NVIDIA cuDSS.
 Building on these advantages, we examine the KKT
systems arising from NCL 
subproblems.
We present an optimized GPU 
implementation of Algorithm NCL 
by leveraging MadNLP as an interior-point subproblem solver and
utilizing the stabilized and condensed formulations of the KKT 
systems
for computing Newton steps. Numerical experiments on various large-scale and
degenerate NLPs, including optimal power flow, COPS benchmarks,
and security-constrained optimal power flow, demonstrate that MadNCL operates efficiently on
GPUs 
while effectively managing problem degeneracy,
including MPCC constraints.
\end{abstract}

\begin{keywords}
  nonlinear programming,
  augmented Lagrangian method,
  interior-point methods,
  constraint qualifications,
  degeneracy,
  KKT systems,
  MPCC constraints,
  graphics processing units
\end{keywords}

\begin{AMS}
    65K05, 
    90C30, 
    49M37, 
    90C06 
\end{AMS}

\section{Introduction}
We consider the \gls{nlp} problem
\[\label{eqn:nlp}
   \problem{NLP}{x \in \mathbb{R}^n}
                {\phi(x)}
                {c(x)=0, \quad \ell \leq x \leq u,}
\]
where $\phi: \mathbb{R}^n \to \mathbb{R}$ is a smooth nonlinear objective function and $c: \mathbb{R}^n \to \mathbb{R}^m$ is a vector of smooth nonlinear constraint functions.
We assume that $\phi$ and $c$ are twice continuously differentiable and their derivatives are accessible, thereby enabling the use of second-order algorithms.
Our focus is on an
augmented Lagrangian method 
for large-scale instances of \gls{nlp} problems on \gls{gpu} hardware.
In particular, we are interested in problems with degenerate constraints, i.e., for which a standard constraint qualification condition does not hold.

\paragraph{Augmented Lagrangian method}
The \gls{alm} is one of the major \gls{nlp} algorithms \cite{nocedal_numerical_2006}. While \gls{sqp} and interior-point methods gained prominence in the 1980s because of their fast convergence properties \cite{nocedal_numerical_2006}, \gls{alm}, first devised in the 1960s \cite{hestenes1969multiplier,rockafellar1974augmented}, has the key advantage of robustly handling degenerate optimization problems and is capable of detecting infeasibility quickly \cite{chiche2014augmented}. We refer to \cite{bertsekas1976multiplier,bertsekas2014constrained,birgin2014practical} for comprehensive surveys on \gls{alm}.
The augmented Lagrangian penalty can be interpreted as a quadratic regularization in the dual~\cite{rockafellar1974augmented}.
Hence, \gls{alm} subproblems are nondegenerate regardless of the reformulation being used.
This property enables \gls{alm} to achieve superior robustness, especially for NLP problems with redundant constraints. Because of its robustness, \gls{alm} has witnessed a resurgence in the 2010s, coinciding with new methodological breakthroughs \cite{gill2012primal,curtis2015adaptive,izmailov2015combining,ma2018stabilized}.
It has been proven to exhibit fast local convergence under minimal assumptions \cite{fernandez2012local}.
Software-wise, \gls{alm} has been utilized effectively across a wide range of solver implementations, including the classical LANCELOT \cite{CGT91,CGT92} and MINOS \cite{MINOSGAMS} solvers.  Most of these improvements have been aggregated into the solver ALGENCAN \cite{andreani2008augmented}, which now offers competitive performance relative to LANCELOT.
In parallel, there has been recent interest in merging the augmented Lagrangian with \gls{ipm}~\cite{armand2017mixed,kuhlmann2018primal}.

There are two main approaches regarding the subproblem formulations within \gls{alm}, which greatly affect the numerical treatment of the subproblem solution procedures. The classical implementations (e.g., LANCELOT) are based on \gls{bcl} subproblems:
\[
   \problem{\BCk}{x \in \R^n}
   {\phi(x) - y_k\T c(x) + \half \rho_k \norm{c(x)}^2}
   {\ell \le x \le u,}
\]
where $y_k \in \R^m$ is an estimate of Lagrange multipliers for the equality constraints $c(x)=0$, $\rho_k  > 0$ is a penalty parameter, and $k=1,2,\dots, p$ is an iteration counter. In \BCk, 
the nonlinear constraints are included in the quadratic penalty term of the objective function, and the bound constraints are treated directly. This structure provides an advantage in applying active-set approaches, such as projected gradient or projected Newton methods, to solve the subproblems. Further, 
only the objective changes between two consecutive \gls{alm} iterations, enabling effective warm-starting of active-set methods. 
A disadvantage of the \gls{bcl} formulation is that the quadratic penalty term can lead to fully dense Lagrangian Hessians when a dense row exists in the Jacobian of $c(x)$.
Large dense Hessians can be challenging to handle unless their internal structure is exploited.

In Algorithm NCL \cite{ma2018stabilized}, subproblem \BCk is formulated as 
\[
   \problem{\NCk}{x \in \R^n,\,r \in \R^m}
                 {\phi(x) + y_k\T r
                          + \half \rho_k \norm{r}^2}
                 {c(x) + r = 0, \quad \ell \le x \le u,}
\]
where the added free variables $r$ render the nonlinear constraints linearly independent.
Although \NCk is mathematically equivalent to \BCk, its solution needs a different numerical implementation.
In particular, it contains nonlinear constraints and many new variables $r$, requiring an \gls{ipm} solver. The key advantage of the NCL formulation arises from its more flexible handling of constraints: the new variables $r$ play the role of a natural regularizer and enable solution of problems with constraint degeneracy (violation of LICQ).

The performance of an \gls{alm} solver depends directly on the algorithms used to solve the subproblems.
The outer iterations of \gls{alm} are typically simple factorization-free operations, and thus, most of the computational effort is spent on solving the subproblems.
In NCL, the \gls{ipm} subproblem solver for \NCk spends most of its time solving \gls{kkt} systems to compute search directions.
Thus, efficient subproblem solution strategies are key to the efficient implementation of NCL. 

\paragraph{\gls{gpu} implementations of optimization solvers}
As \gls{gpu}s become more prevalent in scientific computing, there is a stronger incentive to leverage \gls{gpu}-accelerated routines inside optimization solvers.
Current optimization solvers are largely based on algorithms developed in the 1980s and 1990s, and were not designed to take advantage of \gls{gpu} architecture.
Consequently, most optimization solvers are sequential in nature, utilizing parallel operations only at the linear algebra level (e.g., multi-thread parallelism within BLAS routines or in the linear solver used to compute each Newton direction).
Given these limitations, the last several years have seen a surge in efforts to harness \gls{gpu}s to accelerate the solution of large-scale optimization problems, resulting in several notable breakthroughs.

Recent developments in mathematical optimization on \gls{gpu}s can be classified into two concurrent threads. The first thread employs first-order or factorization-free methods, with a focus on convex optimization problems. We refer to the recent \gls{gpu} implementation of PDLP (Primal-Dual Hybrid Gradient for LP) to solve large-scale LPs~\cite{lu2023cupdlp,lu2025cupdlpx} and the ADMM algorithm implemented in OSQP~\cite{schubiger2020gpu}. These methods rely on efficient factorization-free implementations to leverage parallelism. For example, PDLP can be implemented using sparse matrix-vector multiplication, which is amenable to parallelization with efficient implementations on \gls{gpu}s (e.g., via cuSPARSE or rocSPARSE libraries) \cite{lu2023cupdlp}. Similarly for projected gradient-based implementations of operator-splitting methods
\cite{schubiger2020gpu}.

The second thread of research focuses on second-order methods with direct sparse solvers, which have gained popularity for nonconvex NLPs (where first-order or factorization-free approaches are not effective). In this approach, \gls{ipm} is used as a skeleton, and various linear algebra tricks are employed to handle the associated \gls{kkt} systems. Until recently, the sparse factorizations available on \gls{gpu}s were notoriously slow~\cite{tasseff2019exploring}. Consequently, solvers for general NLPs were not available, and domain-specific implementations (e.g., exploiting a nonsingular block within the constraint Jacobian) have been investigated~\cite{pacaud2024accelerating}. This has changed with the recent release of NVIDIA cuDSS, an efficient general-purpose sparse linear solver for \gls{gpu}s.
Although cuDSS cannot be directly utilized within existing NLP solvers because of inefficiencies in handling indefinite systems, two reformulation strategies can be employed to enforce \gls{gpu}-amenable structures in the \gls{kkt} system: hybrid \gls{kkt} system~\cite{regev2023hykkt} and lifted \gls{kkt} system~\cite{shin2024accelerating}. With these reformulations, the \gls{kkt} systems can be solved effectively on \gls{gpu}s, making the implementation of a general-purpose \gls{gpu} \gls{ipm} solver practical.
An example is MadNLP~\cite{shin2024accelerating},
which we use as a stand-alone IPM NLP solver, and as
an NCL subproblem solver within MadNCL (our implementation of Algorithm NCL).

Although the lifted and hybrid \gls{kkt} system reformulation strategies enable the solution of \gls{kkt} systems on \gls{gpu}s, they cannot achieve the same degree of robustness as the classical CPU implementations of \gls{ipm} solvers, such as Ipopt or KNITRO. As a consequence, the default optimality tolerance for the \gls{gpu} version of MadNLP is relaxed to $10^{-6}$.
This is significantly looser than the $10^{-8}$ tolerance in mature \gls{ipm}s. The primary reason for this compromised solver robustness stems from the significantly increased condition number of the \gls{kkt} system resulting from the condensation processes used within these approaches \cite{pacaud2024condensed}. This 
is currently recognized as a fundamental limitation of \gls{gpu}-accelerated \gls{ipm} solvers.

\paragraph{Goal}
Our main research question is:
\begin{quote}
  \emph{Can a \gls{gpu} implementation of
  Algorithm NCL
  overcome the limitations of current \gls{ipm} \gls{gpu} implementations in terms of robustness and achievable optimality tolerance while maintaining similar speed?}
\end{quote}
We aim to answer this question by (i) developing efficient strategies to
solve the \gls{kkt} systems arising from
NCL subproblems on
\gls{gpu}s, and (ii) demonstrating that MadNCL, the \gls{gpu} implementation of NCL,
can achieve competitive performance relative to existing \gls{gpu}
implementations of IPM solvers while maintaining robustness, particularly for degenerate problems.
We develop these strategies by adapting the recent condensed-space method of
\cite{pacaud2024condensed} to handle the \gls{kkt} systems within NCL
subproblems. The condensed \gls{kkt} systems within IPMs and NCL subproblems share substantial
similarity, allowing us to leverage existing \gls{gpu}-accelerated linear
algebra kernels implemented in MadNLP.
Further, the structure of subproblem \NCk leads naturally to the formulation of sparse \emph{condensed KKT} or
sparse \emph{stabilized KKT} matrices, which can be factorized using static pivoting, respectively by the Cholesky and
\ldlt algorithms implemented in \gls{gpu} solvers like cuDSS. In this
article, we provide an optimized \gls{gpu} implementation of \gls{alm} based on the NCL subproblem formulation, which we refer to as
MadNCL, by leveraging the \gls{gpu}-accelerated solver
MadNLP~\cite{shin2024accelerating} as an IPM subproblem solver and
utilizing \emph{condensed} and \emph{stabilized} formulations of the \gls{kkt} systems for computing
Newton steps. Our numerical results show that when running on the \gls{gpu},
NCL offers a more robust alternative to the condensed-space method of
\cite{pacaud2024condensed}. In particular, we demonstrate that although
\gls{alm}s can be slightly slower than condensed-space IPM
approaches, they are significantly more robust at solving degenerate problems,
overcoming the limitations of existing \gls{gpu}-accelerated IPM solvers.

\paragraph{Contributions}
Our main contributions follow.
\begin{itemize}[leftmargin=*]
\item
  We provide for the first time a \gls{gpu} \gls{alm} implementation for solving large-scale \glspl{nlp}.
  Notably, the previous implementations of Algorithm NCL used the \gls{ipm} solvers Ipopt and KNITRO to solve subproblems \NCk \cite{ma2018stabilized,ma2021julia}.
  Instead, we use MadNLP~\cite{shin2024accelerating}, a \gls{gpu} \gls{ipm} implementation with a filter line search algorithm~\cite{wachter2006implementation}.
  MadNCL has full control over MadNLP's internals, enabling it to exploit the structure of the KKT systems in the subproblems \NCk for more efficient linear algebra operations.
\item
  We present two different \gls{kkt} system formulations, called the \emph{stabilized KKT system} ($K_{2r}$)
  and the \emph{condensed KKT system} ($K_{1s}$), adopting the naming convention introduced in~\cite{ghannad-orban-saunders-2022}.
  We show that both systems can be factorized efficiently on \gls{gpu}s without the use of numerical pivoting.
  As a consequence, the $K_{2r}$ and $K_{1s}$ systems can be solved on the GPU by NVIDIA cuDSS.
  Furthermore, these reformulations inherit 
  established properties
  of the commonly used augmented KKT system ($K_2$) formulation, thereby guaranteeing
  descent directions for the NCL subproblems without introducing additional computational overhead.
\item
    Numerically, we demonstrate that NCL can effectively take advantage of \gls{gpu} parallelism and exhibit superior robustness.
    In particular, NCL can be implemented seamlessly on a \gls{gpu}, provided a GPU-based IPM solver like MadNLP is available, and the resulting MadNCL offers competitive performance relative to other \gls{gpu} optimization solvers~\cite{pacaud2024condensed}.
    Further, we show that MadNCL is able to solve large real-world degenerate problems, such as large-scale mathematical programs with complementarity constraints (MPCC).
\end{itemize}

\paragraph{Outline}
The remainder of this paper is organized as follows.
\Cref{sec:alm} provides a brief overview of \gls{alm} and the formulation of NCL subproblems.
\Cref{sec:kkt} describes the \gls{kkt} systems arising within NCL subproblems and their condensed forms.
\Cref{sec:num:implementation} outlines implementation details of MadNCL, our \gls{gpu} implementation of \gls{alm} based on the NCL subproblem formulation.
\Cref{sec:num} presents numerical results for MadNCL on various degenerate \glspl{nlp}.
\Cref{sec:conc} concludes and discusses future work.

\section{Augmented Lagrangian Method}\label{sec:alm}
By design, \gls{alm} is a two-level method: the penalty parameter and Lagrange multipliers are updated
during an \emph{outer loop} (see \S\ref{sec:ncl:outer}), whereas the subproblems \BCk or \NCk are
solved in an \emph{inner loop} (see \S\ref{sec:ncl:inner}). Our implementation MadNCL
merges these two loops to improve the overall performance, following \cite{armand2017mixed}.

\subsection{\gls{alm} outer loop} \label{sec:ncl:outer}
\gls{alm} solves the augmented Lagrangian subproblem with a fixed penalty parameter $\rho_k > 0$ and a multiplier estimate $y_k \in \mathbb{R}^m$.
Updating either the multipliers or the penalty parameter depends on the observed primal infeasibility just after completion of the current inner loop (solution of the subproblem).

Upon obtaining a solution $(x_{k+1}, r_{k+1})$ for the augmented Lagrangian subproblem, \gls{alm} updates the multipliers $y_k$ if the primal infeasibility is below a specified accuracy level $\eta_k$; otherwise, it increases the penalty parameter $\rho_k$.
The following is a commonly used update rule:
\begin{equation}
  \label{eq:auglagupdate}
(y_{k+1},~\rho_{k+1}) =
\begin{cases}
(y_k - \rho_k c(x_{k+1}),~\rho_k) & \text{if } \|c(x_{k+1})\|_{\infty} \leq \eta_k, \\
(y_k,~10\rho_k) & \text{otherwise.}
\end{cases}
\end{equation}
In this update rule, the multipliers $y_k$ are updated by subtracting the primal infeasibility $c(x_{k+1})$ scaled by the penalty parameter $\rho_k$ whenever the primal infeasibility is sufficiently small. This can be interpreted as a gradient ascent step in the dual space, with the step size determined by the penalty parameter $\rho_k$~\cite{rockafellar1974augmented}.
If the primal infeasibility is not sufficiently small, the penalty parameter $\rho_k$ is increased by a factor of $10$ to encourage the next subproblem to exhibit less infeasibility.
For nonconvex problems, sufficiently large penalty parameters must be employed to ensure that the subproblem can recover the exact solution when the multiplier estimates are accurate \cite{nocedal_numerical_2006}.
While traditional \gls{alm} implementations utilize a projected Newton method~\cite{CGT92,nocedal_numerical_2006} to solve \BCk, the solution of \NCk requires the use of an IPM solver, namely MadNLP here.

\subsection{\gls{ipm} inner loop} \label{sec:ncl:inner}

The \gls{ncl} subproblem formulation~\cite{ma2018stabilized} adapts the classical BCL formulation by introducing free variables $r \in \mathbb{R}^m$ and equality constraints. This reformulation yields the \emph{nonlinearly constrained subproblem} \NCk.
Although \NCk
is much larger than \BCk,
its structure is more amenable to efficient solution using an \gls{ipm}: the variables $r$ render the constraints linearly independent. Further, as the parameters $\rho_k$ and $y_k$ appear only in the objective, it is easier to warm-start an \gls{ipm} to solve \NCk with new parameters $\rho_{k+1}$ and $y_{k+1}$ derived from the usual update rule~\eqref{eq:auglagupdate}, as noted in \cite{ma2021julia}.

For fixed parameters $(y_k, \rho_k)$, let $y \in \mathbb{R}^{m}$ be the multipliers associated with the equality constraints in \NCk, and $(z_l, z_u) \in \mathbb{R}^{n} \times \mathbb{R}^{n}$ be the nonnegative multipliers related to the bound constraints on~$x$. The Lagrangian for \NCk is
\begin{equation}
    \label{eq:lagrangianncl}
    \Lagr(x, r, y, z_l, z_u) = \phi(x) + y_k^\top r + \tfrac{1}{2} \rho_k \| r \|^2 - y^\top (c(x) + r) - z_l (x-\ell) - z_u (u-x),
\end{equation}
and the associated \gls{kkt} conditions are
\begin{equation}
  \label{eq:kktncl}
  \begin{aligned}
     \nabla \phi(x) - \nabla c(x)^\top y - z_l + z_u &= 0 \\
     y_k + \rho_k r - y &= 0 \\
     c(x) + r &= 0 \\
     0 \le x-\ell \perp z_l &\ge 0 \\
     0 \le u-x \perp z_u &\ge 0.
  \end{aligned}
\end{equation}
Algorithm \gls{ncl}~\cite{ma2018stabilized} applies Newton's method to \eqref{eq:kktncl}, globalized here with a filter line search~\cite{wachter2006implementation}. Once a primal solution $(x_{k+1}, r_{k+1})$ is found, parameters $y_k$ and $\rho_k$ are updated based on rule \eqref{eq:auglagupdate}, and
we proceed by solving a new subproblem NC$_{k+1}$.

\subsection{Fusion of \gls{alm} outer loop and \gls{ipm} inner loop} \label{sec:ncl:madncl}

MadNCL fuses the outer and inner loops described 
in \S\ref{sec:ncl:outer} and in \S\ref{sec:ncl:inner}. In addition,
MadNCL uses an extrapolation step to converge asymptotically at a superlinear rate~\cite{armand2017mixed,dussault1995numerical}.
The complete algorithm is detailed in Algorithm~\ref{algo:nclipm}.

\begin{algorithm}[!ht]
  \caption{MadNCL}
  \label{algo:nclipm}
  \begin{algorithmic}
  \STATE{Initialize variable $x_0$.}
  \STATE{Set primal and dual tolerances $(\eta_\star, \omega_\star)$.}
  \STATE{Set $\gamma = 0.05$, $\tau = 1.99$, $\mu_{\text{fac}} = 0.2$, $\rho_{\max} = 10^{14}$, $\theta = 0.5$.}
  \STATE{Set initial parameters $\rho_0 \gets 100$ , $\mu_0 \gets 0.1$.}
  \STATE{Compute initial dual variable $y_0$ using least-squares.}
  \FOR{$k=1,2,\dots$}
    \STATE{Solve $\nabla_w F_k(w_k) d_k + F_k(w_k) = 0$ and set $w_k^+ = w_k + \alpha_k d_k$.}
    \IF{$ \|F_k(w_{k}^+) \|_\infty \leq \theta \|F_k(w_k) \|_\infty + 10 \alpha_k^{0.2} \mu_k$}
    \STATE{Set $w_{k+1} = w_k^+$}
    \ELSE
    \STATE{Find $w_{k+1}$ satisfying $\| F_k(w_{k+1}) \|_\infty \leq \omega_k$ using MadNLP.}
    \ENDIF
    \IF{$\|r_{k+1}\|_\infty \leq \eta_k$}
    \STATE{$y_{k+1} \gets y_k + \rho_k r_{k+1}$}
    \STATE{$\mu_{k+1} \gets \min\{ (\mu_k)^\tau, \mu_{fac} \times \mu_k \}$}
    \STATE{$\eta_{k+1} \gets \min\{(\mu_{k+1})^{1.1}, 0.1 \times \mu_k \}$}
    \STATE{$\omega_{k+1} \gets 100 \times (\mu_{k+1})^{1 + \gamma}$}
    \STATE{$\rho_{k+1} \gets \rho_k$}
    \ELSE
    \STATE{$\rho_{k+1} \gets \min\{\rho_{\max}, 10 \rho_k \}$}
    \STATE{$(y_{k+1}, \mu_{k+1})  \gets (y_k, \mu_k)$}
    \STATE{$(\eta_{k+1}, \omega_{k+1}) \gets (\eta_k, \omega_k)$}
    \ENDIF
    \IF{$\|r_{k+1}\|_\infty \leq \eta_\star$ \text{~and~} $\|\nabla f(x_{k+1}) - \nabla c(x_{k+1})^\top y_{k+1} \|_\infty \leq \omega_\star$}
    \STATE{Solution is locally optimal, stop}
    \ENDIF
    \IF{$\rho \geq \rho_{max}$ \text{~and~} $\|r_{k+1}\|_\infty > \eta_{\star}$}
    \STATE{Problem is locally infeasible, stop}
    \ENDIF
  \ENDFOR
  \end{algorithmic}
\end{algorithm}

\paragraph{Inner iteration}
For a given barrier parameter $\mu_k > 0$, multiplier estimate $y_k$, and penalty $\rho_k$,
\gls{ipm}
reformulates the KKT equations~\eqref{eq:kktncl} using a homotopy continuation method.
Let $w := (x, r, y, z_l, z_u)$ denote the vector of primal-dual variables. For primal variables in the strict interior, where $(x - \ell,~u - x) > 0$, the inner iterations solve the following system for $w$:
\begin{equation}
  \label{eq:kktnclipm}
F(w, \rho_k, y_k, \mu_k) =
  \begin{bmatrix}
     \nabla \phi(x) - \nabla c(x)^\top y - z_l + z_u
  \\ y_k + \rho_k r - y
  \\ c(x) + r
  \\ Z_l (x - \ell) - \mu_k e
  \\ Z_u (u - x) - \mu_k e
  \end{bmatrix} \; = 0 ,
\end{equation}
with $Z_l = \diag(z_l)$ and $Z_u = \diag(z_u)$.
As \eqref{eq:kktnclipm} is a smooth system of nonlinear equations,
 the inner \gls{ipm} iterations utilize a globalized Newton method to solve it.
 For a specified tolerance $\omega_k$, the next primal-dual iterate $w_{k+1}$ solves
\begin{equation}
  \label{eq:ncl:inneriter}
  \| F(w_{k+1}, \rho_k, y_k, \mu_k) \|_\infty \leq \omega_k.
\end{equation}
To simplify notation, we define $F_k(w) := F(w, \rho_k, y_k, \mu_k)$.

\paragraph{Extrapolation step}
Once it is close to convergence, MadNCL applies an extrapolation step
to achieve fast superlinear convergence \cite{armand2013global,dussault1995numerical}.
Let $\theta \in (0, 1)$ and $\varepsilon_k > 0$.
Before solving \eqref{eq:ncl:inneriter}, the extrapolation step solves the linear system
\begin{equation}
  \label{eq:ncl:extrapolation}
  \nabla_w F_k(w_k) d_k + F_k(w_k) = 0
\end{equation}
and sets $w_k^+ = w_k + \alpha_k d_k$, with $\alpha_k$ computed
using a fraction-to-boundary rule ensuring that $w_k^+$ remains strictly feasible
(i.e., $\ell < x_k^+ < u$ and $(z_{\ell,k}^+, z_{u,k}^+) > 0$).
If
\begin{equation}
  \|F_k(w_{k}^+) \|_\infty \leq \theta \|F_k(w_k) \|_\infty + \varepsilon_k \; ,
\end{equation}
we set $w_{k+1} = w_k^+$ directly. Otherwise, we perform inner iterations
to find an iterate $w_{k+1}$ satisfying \eqref{eq:ncl:inneriter}.
Following \cite[Section 5.3]{armand2013global}, we set $\varepsilon_k = 10 \alpha_k^{0.2} \mu_k$.

In other words, if $w_k^+$ makes sufficient progress towards optimality, we discard
the inner iteration and move directly to the next outer iteration by setting
$w_{k+1} = w_k^+$. This
ensures a full Newton step close to optimality, resulting in
a superlinear rate of convergence in the final iterations~\cite{arreckx2018regularized}.

\paragraph{Outer iteration}
The outer iterations are a variant of the 
implementation in
LANCELOT, as we simultaneously update the barrier parameter with the
other augmented Lagrangian parameters~\eqref{eq:auglagupdate}. The update rules for tolerances
$\eta_k$ and $\omega_k$ are adapted from the Superb algorithm~\cite{gould2015interior}.

\section{Analysis of the \gls{kkt} systems}
\label{sec:kkt}

In this section, we analyze the linear systems arising from the NCL
subproblem formulation and demonstrate how they can be transformed into a
\emph{stabilized KKT system} and a \emph{condensed KKT system}, suitable for efficient \gls{gpu} implementation.

\subsection{Newton system}
In Algorithm~\ref{algo:nclipm} (MadNCL), the inner iterations consist in applying Newton's method to~\eqref{eq:kktnclipm} with fixed parameters $(y_k, \rho_k, \mu_k)$. Successive Newton iterations lead to a sequence of \gls{kkt} systems. In deriving these linear systems, we explicitly distinguish between equality and inequality constraints to exploit the block structure associated with slack variables. Therefore, in this section, we base our analysis on the following Nonlinear Constrained Optimization (NCO) problem:
$$
   \problem{NCO}{t}
                {f(t)}
                {c_{\mathcal{E}}(t)=0,\quad \ell_s\le c_{\mathcal{I}}(t)\le u_s,\quad \ell_t\le t\le u_t.}
$$
Compared to problem NLP, we treat equality and inequality constraints separately. This formulation can be transformed into problem NLP by setting
$$
\phi(x) = f(t), \quad c(x) = \bmat{ c_{\mathcal{E}}(t) \\ c_{\mathcal{I}}(t) - s } = 0, \quad x = \bmat{ t \\ s }, \quad \ell = \bmat{ \ell_t \\ \ell_s }, \quad u = \bmat{ u_t \\ u_s }.
$$

We now derive the various Newton systems. Assuming $x - \ell$ and $u - x$ are nonnegative, Algorithm~\ref{algo:nclipm} finds the Newton descent direction $\Delta w = (\Delta x,\,\Delta r,\,\Delta y,\,\Delta z_l,\,\Delta z_u)$ as the solution of
\begin{equation}
  \label{eq:newton_step}
  \nabla_w F(w, \rho_k, y_k, \mu_k) \Delta w = -F(w, \rho_k, y_k, \mu_k).
\end{equation}
Note that $\Delta w$ corresponds to $d_k$ in Algorithm~\ref{algo:nclipm}.
This gives
the 
linear system
\begin{equation}
  \tag{$K_3$}
  \label{eq:K3}
  \bmat{
    \phantom{-} H     & 0 & - J^\top  & \!\!\!\!-I & I \\
    \phantom{-}0      & \rho_k I & - I\phantom{^\top}  & \!\!\!\!\phantom{-}0  & 0 \\
    \phantom{-}J      & I & 0  & \!\!\!\!\phantom{-}0  & 0 \\
    \phantom{-}Z_l    & 0 & 0  & \!\!\!\!\phantom{-}X - L & 0 \\
    -Z_u   & 0 & 0  & \!\!\!\!\phantom{-}0  & U - X
  }
  \bmat{
       \Delta x \\ \Delta r \\ \Delta y \\ \Delta z_l \\ \Delta z_u
  }
  =
    - \bmat{
     \nabla \phi(x) -  J^\top y  - z_l + z_u \\
     y_k + \rho_k r - y \\
     c + r \\
     Z_l (x - \ell) - \mu_k e \\
     Z_u (u - x) - \mu_k e
  },
\end{equation}
where $c = c(x)$, $J^\top = J(x)^\top = [\nabla c_1(x)\ \cdots\ \nabla c_m(x)]$, $S = \diag(s)$, $X = \diag(x)$, $L = \diag(\ell)$, $U = \diag(u)$, $Z_l = \diag(z_l)$, $Z_u = \diag(z_u)$, and $H$ is the Hessian of the Lagrangian $\mathcal{L}(x, r, y, z_l, z_u)$ (with respect to $x$):
$$
H = \nabla^2 \phi(x) - \sum_{j=1}^{m} y_j \nabla^2 c_j(x).
$$

Eliminating
\begin{equation}
  \label{eq:K3_blocks}
  \begin{aligned}
    \Delta z_l &= -(X-L)^{-1} \bigl(Z_l\,\Delta x - \mu_k e\bigr) - z_l, \\
    \Delta z_u &= (U-X)^{-1} \bigl(Z_u\,\Delta x + \mu_k e\bigr) - z_u
  \end{aligned}
\end{equation}
leads to the equivalent symmetrized Newton system (used by default in previous implementations of NCL~\cite{ma2018stabilized,ma2021julia}):
\begin{equation}
  \label{eq:K2unreg}
  \bmat{
       \hat{H} & 0           & J^\top \\
       0       & \rho_k I    & I \\
       J       & I           & 0
  }
  \bmat{
    \phantom{-}\Delta x \\ \phantom{-}\Delta r \\ -\Delta y
  }
  =
    - \bmat{
     \nabla \phi(x) -  J^\top y -(X-L)^{-1} \mu_k e + (U-X)^{-1} \mu_k e \\
     y_k + \rho_k r - y \\
     c + r
  },
\end{equation}
where $\hat{H} = H + \Sigma$ and $\Sigma := (X - L)^{-1} Z_l + (U - X)^{-1} Z_u$ is diagonal.

To get a valid search direction $(\Delta x, \Delta r, -\Delta y)$, the filter line-search algorithm requires
that \eqref{eq:K2unreg} is nonsingular and the Hessian projected onto the null space of Jacobian $J$ is symmetric and positive definite (SPD).
This is equivalent to stating that the inertia (the number of positive, negative, and zero eigenvalues) of \eqref{eq:K2unreg} is equal to $(n+m, m, 0)$~\cite{benzi2005numerical,gould1985practical}.
The free variables $r$ guarantee that the constraint Jacobian $\bmat{ J & I }$ has full row rank, ensuring that the matrix in \eqref{eq:K2unreg} has no zero eigenvalues.
To obtain the correct inertia, inertia-correction methods add a primal regularization term $\delta_k$ to the upper-left blocks, replacing the matrix in \eqref{eq:K2unreg} by the regularized matrix
\begin{equation}
  \tag{$K_2$}
  \label{eq:K2}
  \bmat{
       \hat{H} + \delta_k I  & 0                     & J^\top \\
       0                     & (\rho_k + \delta_k) I & I \\
       J                     & I                     & 0
  } \;.
\end{equation}
The regularization parameter $\delta_k$ is increased until the desired inertia is achieved, guaranteeing that a valid search direction is found for the filter line-search algorithm. To simplify notation,
we incorporate the inertia regularization into the penalty parameter by defining
\[
\hat{\rho}_k := \rho_k + \delta_k.
\]

\paragraph{Sparse \lblt factorization}
If a sparse \lblt factorization is computed,
the inertia of \eqref{eq:K2} can be obtained from the block diagonal matrix $B$ (with 1x1 or 2x2 diagonal blocks).
Previous implementations of \gls{ncl}-based ALM~\cite{ma2018stabilized,ma2021julia} have utilized the Newton \gls{kkt} system \eqref{eq:K2} to compute $(\Delta x, \Delta r, -\Delta y)$ during the Newton iterations. Direct sparse linear solvers such as Duff and Reid's MA27 \cite{MA27} and MA57 \cite{MA57} are frequently employed, all utilizing numerical pivoting to ensure numerical stability and to handle ill-conditioned indefinite linear systems.
However, due to the challenges of implementing numerical pivoting on parallel architectures, these traditional direct linear solvers are difficult to port efficiently to GPUs.

\paragraph{Sparse \ldlt factorization}
\ldlt factorization (with $D$ diagonal) is more economical than \lblt factorization, but it is not guaranteed to exist for every symmetric indefinite matrix.
We observe that under a suitable symmetric permutation, \eqref{eq:K2} may admit an \ldlt factorization. In particular, with the permutation matrix
$$
P = \bmat{ 0 & I & 0 \\ 0 & 0 & I \\ I & 0 & 0 },
$$
we can transform \eqref{eq:K2} into block-diagonal form:
\begin{multline}
  \label{eq:ldlk2}
  P^\top
  \bmat{
       \hat{H} + \delta_k I & 0 & J^\top \\
       0       & \hat{\rho}_k I  & I \\
       J       & I         & 0
  } P
  = \\
  \bmat{
    I & 0 & 0 \\
    \theta_k I & I & 0 \\
    0 & -\hat{\rho}_k J^\top & I
  }
  \bmat{
    \hat{\rho}_k I & 0 & 0 \\
    0 & -\theta_k I & 0 \\
    0 & 0 & \hat{H} + \delta_k I + \hat{\rho}_k J^\top J
  }
  \bmat{
    I & \theta_k I & 0 \\
    0 & I & -\hat{\rho}_k J \\
    0 & 0 & I
  } \; ,
\end{multline}
with $\theta_k := \hat{\rho}_k^{-1}$.
This shows that an \ldlt decomposition is always possible
provided the matrix $\hat{H} + \delta_k I + \rho_k J^\tiptop J$ admits itself an \ldlt factorization.
For example, the regularization $\delta_k$ can be sufficiently large. The positive definiteness of $\hat{H} + \delta_k I + \rho_k J^\tiptop J$ plays a crucial role in the inertia analysis in \Cref{sec:inertia}.
However, we emphasize \ldlt decomposition is not guaranteed to exist for every
permutation $P$ because \eqref{eq:K2} is not strongly factorizable, implying that
zero pivots can be encountered during the factorization.

When the \ldlt factorization routine encounters a zero pivot, two cases can
occur: (i) The factorization stops, returning an incorrect inertia to the
nonlinear solver. In this case, the inertia regularization increases
$\delta_k$ until the \ldlt factorization succeeds.
(ii) The factorization replaces a null pivot with a value $\pm \varepsilon$ close to machine precision and proceeds to complete the factorization.
The sign of $\varepsilon$ depends on which block of the matrix (before reordering) the pivot belongs to~\cite{andersen1996}.
Some linear solvers, including cuDSS, do not offer the option to set the sign of $\varepsilon$, reducing control during factorization.
If the linear solver returns the correct inertia, the nonlinear solver does not perform any inertia correction.
However, the factorization using the pivot perturbation strategy decomposes a perturbation of the original
\gls{kkt} system. The descent direction can be recovered afterwards if
iterative refinement is used, with a greater number of refinements needed as
the optimal solution is reached \cite{schenk2006fast,schenk2007matching}.

In the following sections, we present two
alternative reformulations of \gls{kkt} system \eqref{eq:K2} that preserve desirable definiteness and inertia properties.

\subsection{Stabilized KKT system \eqref{eq:K2r}} \label{sec:kkt:k2r}
We condense the Newton system \eqref{eq:K2} to a smaller stabilized KKT system, easier to solve.
Substituting
\begin{equation}\label{eq:K2-elimination}
  \Delta r = \theta_k (\Delta y - y_k + y) - r
\end{equation}
into \ref{eq:K2} gives 
\begin{equation}
  \tag{$K_{2r}$}
  \label{eq:K2r}
  \bmat{
    \hat{H} + \delta_k I & J^\top \\
    J                    & -\theta_k I
  }
  \bmat{
    \phantom{-}\Delta x \\
    -\Delta y
  }
  =
  - \bmat{
    \nabla \phi(x) - J^\top y -(X-L)^{-1}\mu_k e + (U-X)^{-1}\mu_k e \\
    c - \theta_k (y_k-y)
  } \; .
\end{equation}

In contrast to the original system \eqref{eq:K2}, the lower right block in
\eqref{eq:K2r} is negative definite. If problem NLP is strictly convex, the
Hessian in the $(1, 1)$ block of \eqref{eq:K2r} is SPD, and system
\eqref{eq:K2r} is symmetric quasi definite (SQD) without regularization
($\delta_k = 0$). It is known that SQD matrices are strongly factorizable.  Thus in the convex case there exists an \ldlt decomposition for
every symmetric permutation~\cite{Van95}.

If the problem is nonconvex, system \eqref{eq:K2r} is no longer SQD for
$\delta_k = 0$, and the matrix is not strongly factorizable. Nevertheless,
one can show that if a proper permutation is used, the matrix in
\eqref{eq:K2r} still admits a block \ldlt factorization.
For example, like its sibling \eqref{eq:K2}, system \eqref{eq:K2r} can be decomposed after a symmetric permutation using a backward identity matrix as
\begin{equation}
  \begin{aligned}
  \label{eq:ldlk2r}
  \bmat{0 & I \\ I & 0}
  \bmat{
       \hat{H} + \delta_k I & J^\top \\
       J                    & -\theta_k I
  }
  \bmat{0 & I \\ I & 0}
  &=
  \bmat{
    I & 0 \\
    -\hat{\rho}_k J^\top & I
  }
  \bmat{
    -\theta_k I & 0 \\
    0 & \hat{H} + \delta_k I + \hat{\rho}_k J^\top J
  }
  \bmat{
    I & -\hat{\rho}_k J \\
    0 & I
  } \\
  &=
  \bmat{
    L_{11} & 0 \\
    L_{21} & L_{22}
  }
  \bmat{
    D_{11} & 0 \\
    0 & D_{22}
  }
  \bmat{
    L_{11}^\top & L_{21}^\top \\
    0 & L_{22}^\top
  },
  \end{aligned}
\end{equation}
with $L_{11} = I$, $D_{11} = -\theta_k$, $L_{21} = -\hat{\rho}_k J^\top$, and $L_{22} D_{22} L_{22}^\top = \hat{H} + \delta_k I + \hat{\rho}_k J^\tiptop J := M$. Thus, 
the matrix in \eqref{eq:K2r} admits a block \ldlt factorization if the Schur complement $M$ is strongly factorizable (e.g., positive definite).
In \S\ref{sec:inertia} we show that for $\delta_k$
and $\rho_k$ large enough, the matrix $M$ is positive definite.

\subsection{Condensed KKT system \eqref{eq:K1s}}
The stabilized system~\eqref{eq:K2r} can be reduced further by removing the blocks associated with the dual descent direction $\Delta y$.
We obtain a smaller condensed KKT system \eqref{eq:K1s} suitable for a sparse Cholesky factorization, as we 
show in \Cref{sec:inertia}.

System \eqref{eq:K2r} is equivalent to the \emph{condensed \gls{kkt} system}
\begin{align}
\label{eq:K1}
\tag{$K_1$}
(\hat{H} + \delta_k I + \hat{\rho}_k J^\tiptop J)
\Delta x
&=
J^\top (y_k - \hat{\rho}_k c)
- (\nabla \phi(x) - (X-L)^{-1} \mu_k e + (U-X)^{-1} \mu_k e),
\\  \Delta y &=  -\hat{\rho}_k (J \Delta x + c) + y_k - y \;     \label{eq:K1-elimination}.
\end{align}

If NLP represents a problem NCO with inequality constraints, we can reduce further the size of the condensed system by eliminating the slack update $\Delta s$ stored implicitly within $\Delta x$.
We prefer to use~\eqref{eq:K1s} below instead of~\eqref{eq:K1} whenever we have inequality constraints, as the resulting system is smaller.
We introduce the function $h(t) = \bmat{c_{\mathcal{E}}(t)^\top & c_{\mathcal{I}}(t)^\top}^\top$ together
with the matrix $M^\top = \bmat{0 & -I}$ and the two diagonal matrices
\begin{equation}
\Sigma_t = (T - L_t)^{-1} Z_{l, t} + (U_t - T)^{-1} Z_{u, t} \quad \text{and} \quad \Sigma_s = (S - L_s)^{-1} Z_{l, s} + (U_s - S)^{-1} Z_{u,s}.
\end{equation}
The final condensation is detailed in the next proposition.

\begin{proposition}[Condensed KKT system]
  If~\eqref{eqn:nlp} has structure NCO, then~\eqref{eq:K1} is equivalent to
  \begin{equation}
      \tag{$K_{1s}$}
      \label{eq:K1s}
      \left( W_t + \Sigma_t + \hat{\rho}_k J_{c_\mathcal{E}}^\top J_{c_\mathcal{E}} + \hat{\rho}_k J_{c_\mathcal{I}}^\top \Omega_k J_{c_\mathcal{I}} \right) \Delta t = r_t + \hat{\rho}_k J_{c_\mathcal{I}}^\top r_s \; ,
  \end{equation}
  with
  \begin{align}
    W_t &:= \nabla^2 f(t) - \sum_{i=1}^m y_i \nabla^2 h_i(t)  \;,
    \\K_t &:= W_t  + \hat{\rho}_k J_{c_\mathcal{E}}^\top J_{c_\mathcal{E}} + \hat{\rho}_k J_{c_\mathcal{I}}^\top J_{c_\mathcal{I}} + \Sigma_t ,
    \\ \Omega_k &:= \Sigma_s (\hat{\rho}_k + \Sigma_s)^{-1},
    \\ r_t &:= J_h^\top (y_k - \hat{\rho}_k c) - \nabla f(t) + (T-L_t)^{-1} \mu_k e - (U_t-T)^{-1} \mu_k e,
    \\ r_s &:= M^\top (y_k - \hat{\rho}_k c) + (S - L_s)^{-1} \mu_k e - (U_s - S)^{-1} \mu_k e.
  \end{align}
\end{proposition}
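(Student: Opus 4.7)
The plan is to exploit the NCO block structure $x = (t, s)$ and to reduce $(K_1)$ to $(K_{1s})$ by a single Schur complement step eliminating the slack update $\Delta s$. The non-routine part is recognizing the algebraic identity that collapses the $J_{c_\mathcal{I}}^\top(\cdot)J_{c_\mathcal{I}}$ contributions into the compact $\Omega_k$ form stated in the proposition.

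First I would record how the building blocks of $(K_1)$ partition under $x = (t, s)$. Since $c(x) = (c_\mathcal{E}(t),\, c_\mathcal{I}(t) - s)$, the Jacobian is
\begin{equation*}
J = \bmat{J_{c_\mathcal{E}} & 0 \\ J_{c_\mathcal{I}} & -I},
\end{equation*}
so $J^\top J$ has diagonal blocks $J_{c_\mathcal{E}}^\top J_{c_\mathcal{E}} + J_{c_\mathcal{I}}^\top J_{c_\mathcal{I}}$ and $I$, with off-diagonal block $-J_{c_\mathcal{I}}^\top$. The Hessian $H$ of the Lagrangian is block diagonal with $W_t$ in the $tt$-block and zero in the $ss$-block, because the slacks appear linearly in the constraints. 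The barrier diagonal $\Sigma$ splits as $\blkdiag(\Sigma_t, \Sigma_s)$ since the bounds on $t$ and $s$ are decoupled, and $\nabla \phi(x) = (\nabla f(t), 0)$ with the $\mu_k$-bound terms partitioning in the same way.

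Next, I would assemble the partitioned $(K_1)$ system
\begin{equation*}
\bmat{W_t + \Sigma_t + \hat{\rho}_k\bigl(J_{c_\mathcal{E}}^\top J_{c_\mathcal{E}} + J_{c_\mathcal{I}}^\top J_{c_\mathcal{I}}\bigr) & -\hat{\rho}_k J_{c_\mathcal{I}}^\top \\ -\hat{\rho}_k J_{c_\mathcal{I}} & \Sigma_s + \hat{\rho}_k I}
\bmat{\Delta t \\ \Delta s}
=
\bmat{r_t \\ r_s},
\end{equation*}
in which the right-hand side blocks match exactly the $r_t$ and $r_s$ defined in the proposition. Because $\Sigma_s + \hat{\rho}_k I$ is a strictly positive diagonal matrix, the second block row immediately gives $\Delta s = (\Sigma_s + \hat{\rho}_k I)^{-1}\bigl(r_s + \hat{\rho}_k J_{c_\mathcal{I}} \Delta t\bigr)$.

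The final step is the Schur complement computation. Substituting the expression for $\Delta s$ into the first block row, the coefficient of $J_{c_\mathcal{I}}^\top (\cdot) J_{c_\mathcal{I}}$ becomes $\hat{\rho}_k I - \hat{\rho}_k^2 (\Sigma_s + \hat{\rho}_k I)^{-1}$, and the componentwise identity
\begin{equation*}
\hat{\rho}_k - \frac{\hat{\rho}_k^2}{\hat{\rho}_k + \sigma} = \frac{\hat{\rho}_k \sigma}{\hat{\rho}_k + \sigma}
\end{equation*}
rewrites this as $\hat{\rho}_k \Omega_k$ with $\Omega_k = \Sigma_s(\hat{\rho}_k + \Sigma_s)^{-1}$. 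Combining with the unchanged $W_t$, $\Sigma_t$, and $\hat{\rho}_k J_{c_\mathcal{E}}^\top J_{c_\mathcal{E}}$ contributions produces exactly the operator on the left of $(K_{1s})$, and the same Schur step yields the stated right-hand side. I expect the main obstacle to be bookkeeping rather than anything conceptual: tracking signs, keeping the barrier terms $\Sigma_t$, $\Sigma_s$ and the penalty $\hat{\rho}_k$ in their correct blocks, and exploiting the fact that $s$ enters the constraints only linearly so that no second-derivative term contaminates the $ss$-block.
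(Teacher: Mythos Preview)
Your proposal is correct and follows essentially the same route as the paper: partition $x=(t,s)$, write the $2\times 2$ block form of \eqref{eq:K1} (the paper calls it $K_1'$), eliminate $\Delta s$ via the Schur complement using the invertible diagonal block $\Sigma_s+\hat{\rho}_k I$, and simplify the resulting $J_{c_\mathcal{I}}^\top(\cdot)J_{c_\mathcal{I}}$ coefficient with the same identity $I-\hat{\rho}_k(\hat{\rho}_k I+\Sigma_s)^{-1}=\Sigma_s(\hat{\rho}_k I+\Sigma_s)^{-1}$. The only difference is cosmetic: you spell out the block structure of $J^\top J$, $H$, $\Sigma$, and $\nabla\phi$ before assembling $K_1'$, whereas the paper jumps directly to $K_1'$.
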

\begin{proof}
  As~\eqref{eqn:nlp} has the NCO structure, the
  optimization variable can be decomposed as $x = (t, s)$, with $s$ a slack variable.
  The condensed \gls{kkt} system \eqref{eq:K1} becomes
  \begin{equation}
    \label{eq:K1'}
    \tag{$K_1'$}
    \bmat{
      K_t & -\hat{\rho}_k J_{c_{\mathcal{I}}}^\top
    \\ -\hat{\rho}_k J_{c_{\mathcal{I}}} & \Sigma_s + \hat{\rho}_k I}
    \bmat{
        \Delta t
      \\ \Delta s
    }
    =
    \bmat{
        r_t
      \\ r_s
    }.
  \end{equation}
  We can eliminate
  $\Delta s = (\Sigma_s + \hat{\rho}_k I)^{-1} (\hat{\rho}_k J_{c_{\mathcal{I}}} \Delta t + r_s)$
  in~\eqref{eq:K1'} to obtain the condensed system
  \begin{equation}
    \label{eq:K1s-not-yet}
    \left[K_t - \hat{\rho}_k^2 J_{c_{\mathcal{I}}}^\top (\Sigma_s + \hat{\rho}_k I)^{-1} J_{c_{\mathcal{I}}}\right]
    \Delta t =
    r_t + \hat{\rho}_k J_{c_{\mathcal{I}}}^\top r_s.
  \end{equation}
  The left-hand-side in \eqref{eq:K1s-not-yet} expands as
  \begin{equation*}
  \begin{aligned}
    K_t - \hat{\rho}_k^2 J_{c_{\mathcal{I}}}^\top (\Sigma_s + \hat{\rho}_k I)^{-1} J_{c_{\mathcal{I}}} &=
    W_t + \Sigma_t + \hat{\rho}_k J_{c_\mathcal{E}}^\top J_{c_\mathcal{E}} + \hat{\rho}_k J_{c_\mathcal{I}}^\top(I - \hat{\rho}_k (\hat{\rho}_k + \Sigma_s)^{-1}) J_{c_\mathcal{I}}  , \\
    &=
    W_t + \Sigma_t + \hat{\rho}_k J_{c_\mathcal{E}}^\top J_{c_\mathcal{E}} + \hat{\rho}_k J_{c_\mathcal{I}}^\top\Sigma_s (\hat{\rho}_k + \Sigma_s)^{-1} J_{c_\mathcal{I}}  , \\
    &=
    W_t + \Sigma_t + \hat{\rho}_k J_{c_\mathcal{E}}^\top J_{c_\mathcal{E}} + \hat{\rho}_k J_{c_\mathcal{I}}^\top \Omega_k J_{c_\mathcal{I}},
  \end{aligned}
  \end{equation*}
  where we used the identity
  $I - \hat{\rho}_k (\hat{\rho}_k I + \Sigma_s)^{-1} = \big((\hat{\rho}_k I + \Sigma_s) - \hat{\rho}_k I\big)(\hat{\rho}_k I +\Sigma_s)^{-1} = \Sigma_s (\hat{\rho}_k I + \Sigma_s)^{-1}$.
  By replacing this expression in \eqref{eq:K1s-not-yet}, we obtain \eqref{eq:K1s}.
\end{proof}

On one hand, if an inequality constraint $i$ is active at a solution, then $(\Sigma_s)_{ii} \to +\infty$ as the iterates converge. Hence we have $(\Omega_k)_{ii} \to 1$: the constraint is treated asymptotically as an equality constraint by \gls{ncl}. On the other hand, if the inequality constraint is inactive, then $(\Sigma_s)_{ii} \to 0$, leading to $(\Omega_k)_{ii} \to 0$. The block associated with the inactive constraints becomes negligible in \eqref{eq:K1s}, as expected.

\subsection{Analysis of the inertia of reformulated \gls{kkt} systems}\label{sec:inertia}

\Cref{prop:inertia} shows how the inertia of \eqref{eq:K2} relates to that
of \eqref{eq:K2r} and \eqref{eq:K1s}.
\begin{proposition}
  \label{prop:inertia}
  If $\rho_k, \delta_k >0$, the following statements are equivalent:
  \begin{itemize}
    \item[(i)] $\Inertia(K_2) = (n+m,\, m,\, 0)$;
    \item[(ii)] $\Inertia(K_{2r}) = (n,\, m,\, 0)$;
    \item[(iii)] $\Inertia(K_{1s}) = (n,\, 0,\, 0)$.
  \end{itemize}
\end{proposition}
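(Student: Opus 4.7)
The plan is to prove all three equivalences simultaneously by showing that each of the stated inertias holds if and only if a single ``reference'' matrix is positive definite, namely
\[
    M := \hat{H} + \delta_k I + \hat{\rho}_k J^\tiptop J .
\]
The tool throughout is Sylvester's law of inertia applied to the block \ldlt factorizations already displayed in the excerpt, combined with the observation that congruence preserves inertia and that a symmetric permutation is a particular case of congruence.

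\emph{Step 1: (i) $\Leftrightarrow M \succ 0$.} I would start from the block \ldlt factorization \eqref{eq:ldlk2} of $P^\tiptop K_2 P$. Since $P$ is a permutation, $\Inertia(K_2)=\Inertia(P^\tiptop K_2 P)$, and the \ldlt identity together with Sylvester's law of inertia gives
\[
    \Inertia(K_2) \;=\; \Inertia(\hat{\rho}_k I_m) + \Inertia(-\theta_k I_m) + \Inertia(M) \;=\; (m,0,0)+(0,m,0)+\Inertia(M).
\]
Because $\hat{\rho}_k,\theta_k>0$ (using $\rho_k,\delta_k>0$), statement (i) is therefore equivalent to $\Inertia(M)=(n,0,0)$, i.e.\ $M\succ 0$.

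\emph{Step 2: (ii) $\Leftrightarrow M \succ 0$.} Analogously, I would apply the factorization \eqref{eq:ldlk2r}, which is a congruence between $K_{2r}$ (after a symmetric permutation) and $\blkdiag(-\theta_k I_m, M)$. Sylvester's law of inertia then yields
\[
    \Inertia(K_{2r}) \;=\; (0,m,0) + \Inertia(M),
\]
so (ii) holds iff $M\succ 0$. Combined with Step 1 this gives (i)$\Leftrightarrow$(ii).

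\emph{Step 3: (iii) $\Leftrightarrow M \succ 0$.} Under the NCO structure, $x=(t,s)$, and $M$ coincides with the matrix on the left-hand side of the system \eqref{eq:K1'} in the proof of the preceding proposition, i.e.\ with
\[
    K_1' \;=\; \bmat{K_t & -\hat{\rho}_k J_{c_\mathcal{I}}^\tiptop \\ -\hat{\rho}_k J_{c_\mathcal{I}} & \Sigma_s + \hat{\rho}_k I}.
\]
The $(2,2)$ block $\Sigma_s + \hat{\rho}_k I$ is diagonal and strictly positive because $\Sigma_s \succeq 0$ and $\hat{\rho}_k>0$. Performing a block \ldlt elimination of that bottom-right block is a congruence whose Schur complement is exactly $K_{1s}$ (that calculation was carried out in the proposition just proved). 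Sylvester's law of inertia then delivers
\[
    \Inertia(M) \;=\; \Inertia(K_1') \;=\; \Inertia(\Sigma_s + \hat{\rho}_k I) + \Inertia(K_{1s}),
\]
with the first summand equal to $(|\mathcal I|,0,0)$. Hence $M\succ 0$ iff $K_{1s}\succ 0$, i.e.\ iff (iii) holds. Chaining the three equivalences closes the proof.

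\emph{Expected main obstacle.} The algebra in Steps 1 and 2 is essentially a bookkeeping exercise once the factorizations \eqref{eq:ldlk2}--\eqref{eq:ldlk2r} are accepted. The subtler point is Step 3: one has to make clear that the ``$K_1$'' appearing in the NLP derivation and the block matrix $K_1'$ arising under NCO structure are the same operator, and that the diagonal block $\Sigma_s+\hat{\rho}_k I$ is unconditionally positive definite so that the Schur complement reduction is a genuine congruence. Once this identification is made explicit, the inertia count from the Schur complement is immediate.
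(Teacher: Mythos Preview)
Your proposal is correct and uses essentially the same approach as the paper: Sylvester's law of inertia applied to the block \ldlt / Schur-complement factorizations of $K_2$ and $K_{2r}$. The only organizational difference is that the paper chains (i)$\Leftrightarrow$(ii) and (ii)$\Leftrightarrow$(iii) directly (and in its proof simply identifies $K_{1s}$ with the Schur complement $M$), whereas you route everything through the common pivot $M\succ 0$ and, in Step~3, spell out the extra Schur-complement reduction from $M=K_1'$ to $K_{1s}$ that the paper leaves implicit.
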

\begin{proof}
  It is sufficient to show the equivalence of (i) and (ii), as well as that of (ii) and (iii).

  We first show (i) $\iff$ (ii). By Sylvester's law of inertia applied to \ref{eq:K2}, we have
  \begin{align*}
    \Inertia(K_2)
    = \Inertia
      \bmat{\hat{H} + \delta_k I & 0           & J^\top \\
    0       & \hat{\rho}_k I    & I \\
    J       & I           & 0}
    &= \Inertia(\hat{\rho}_k I) + \Inertia\bmat{\hat{H} + \delta_k I & J^\top \\ J & -\theta_k I} \\
    &= (m,0,0) + \Inertia(K_{2r}),
  \end{align*}
  where the third equality follows from the fact that $\hat{\rho}_k > 0$. Therefore, (i) is equivalent to (ii).

  Next, we show (ii) $\iff$ (iii). We apply Sylvester's law of inertia to the matrix in \ref{eq:K2r} to obtain
  \begin{align*}
    \Inertia(K_{2r}) = \Inertia\bmat{\hat{H} + \delta_k I & J^\top \\ J & -\theta_k I}
                     &= \Inertia(-\theta_k I) + \Inertia(\hat{H} + \delta_k I + \hat{\rho}_k J^\top J)\\
                     &= (0,m,0) + \Inertia(K_{1s}).
  \end{align*}
  Thus, $\Inertia(K_{2r}) = (n, m, 0)$ if and only if $\Inertia(K_{1s}) = (n, 0, 0)$, which is equivalent to the matrix $K_{1s}$ being SPD.
\end{proof}
Here, the assumption $\rho_k, \delta_k > 0$ is always satisfied if $\rho_k$ and $\delta_k$ are selected by Algorithm~\ref{algo:nclipm}.

Proposition~\ref{prop:inertia} has important implications for inertia-based IPMs. 
It demonstrates that if either $\Inertia(K_{2r}) = (n,\, m,\, 0)$ or $\hat{H} + \delta_k I + \hat{\rho}_k J^\top J \succ 0$, the solution of 
system~\eqref{eq:K2} provides a search direction for \NCk.
Thus, even if we base the solver on \eqref{eq:K2r} or \eqref{eq:K1s}, their respective inertias determine if the regularization $\delta_k$ is sufficient to ensure that the Newton step is a descent direction.
This guarantees that the solution proceeding with the \eqref{eq:K2r} or \eqref{eq:K1s} systems can achieve, in principle, the same degree of robustness in the convergence behavior as algorithms based on the original system \eqref{eq:K2}.

\section{Implementation}
\label{sec:num:implementation}
We discuss various aspects of our implementation of Algorithm \gls{ncl} and the numerical benchmark studies.

\subsection{MadNCL}
We have implemented Algorithm~\ref{algo:nclipm} in the MadNCL solver, which is publicly available on GitHub.\footnote{\url{https://github.com/MadNLP/MadNCL.jl}} We adapted the Julia implementation {\tt \gls{ncl}}~\cite{ma2021julia} to utilize MadNLP to solve 
the \NCk subproblems.
We leverage MadNLP's warm-start feature to reuse the data structure throughout the iterations, including the initial symbolic factorization. MadNCL employs the {\tt Abstract\gls{kkt}System} abstraction implemented within MadNLP to implement formulations \eqref{eq:K2r} and \eqref{eq:K1s} as {\tt K2rAuglag\gls{kkt}System} and {\tt K1sAuglag\gls{kkt}System} abstractions, respectively. By utilizing the MadNLP formalism, we can efficiently allocate all data structures and perform all computations on the \gls{gpu}, minimizing the transfer of data between the host (CPU memory) and the device (\gls{gpu} memory).

\subsection{Scaling}
The objective of subproblem \NCk combines the original objective $\phi(x)$ with the augmented Lagrangian term $y_k^\top r + \frac{\rho_k}{2}\|r\|^2$. It is therefore sensitive to problem scaling. For instance, if $\rho_k = 1000$, $y_k = 0$, and there are 10,000 constraints of magnitude $O(1)$ while the objective also has a magnitude of $O(1)$, we add an $O(1)$ term to a term of magnitude $O(10^7)$. This can lead to numerical instabilities.

To address this, we employ an automatic scaling strategy similar to that used in Ipopt~\cite{wachter2006implementation}. Given the initial point $x_0$, $g_0 = \nabla f(x_0)$, and $g_i = \nabla c_i(x_0)$ for $i=1,\dots,m$, we scale the objective by $\sigma_f$ and each constraint by $(\sigma_c)_i$, where
\begin{equation}
  \sigma_f = \max\Big\{10^{-8}, \min\Big(1, \frac{\tau}{\|g_0\|_\infty}\Big)\Big\} \;, \quad
  (\sigma_c)_i = \max\Big\{10^{-8}, \min\Big(1, \frac{\tau}{\|g_i\|_\infty}\Big)\Big\} \; ,
\end{equation}
where \(\tau > 0\).
In our implementation we set $\tau = 1$, which differs from the parameter used in Ipopt ($\tau = 100$) but is the same as in ALGENCAN~\cite{birgin2020complexity}.
Despite being more aggressive, this scaling is more appropriate for the ALM.

\subsection{Linear solver}
On the CPU, MadNCL uses the linear solvers HSL MA27 and HSL MA57.
On the GPU, we use the solver NVIDIA cuDSS for the \ldlt factorization in both
\eqref{eq:K2r} and \eqref{eq:K1s}.

On one hand, the condensed system \eqref{eq:K1s} is positive definite after regularization, meaning it is strongly factorizable. We use the default options in cuDSS
for ordering, \emph{pivot threshold} (used to determine if diagonal element is subject to pivoting and will be
swapped with the maximum element in the row or column) and \emph{pivot epsilon} (used to replace the small diagonal elements encountered during numerical factorization).

On the other hand, the stabilized system \eqref{eq:K2r} is not strongly factorizable
(see \S\ref{sec:kkt:k2r}). We use a pivot regularization strategy within cuDSS
by setting the pivot epsilon to $10^{-10}$. As a result, cuDSS returns the factorization of a slightly perturbed matrix.
We recover the solution of the original system by using iterative refinement (Richardson iterations by default) if the inertia is correct.
This strategy has been implemented before in Ipopt, when the KKT systems are solved with the linear solver Panua Pardiso \cite{schenk2007matching}. We plan to investigate more sophisticated strategies
for iterative refinement in the future~\cite{arioli2007note}.

\subsection{Modeler and automatic differentiation}
The benchmark instances are created using ExaModels~\cite{shin2024accelerating}, which allow for automatic differentiation on \gls{gpu}s. In particular, ExaModels employs the SIMD abstraction to optimize the derivative evaluations over embarrassingly parallel objective and constraint function expressions. ExaModels is highly optimized, often rendering the derivative evaluation time a negligible portion of the overall solution time. Consequently, when tested with problem instances implemented using ExaModels, MadNCL's performance is primarily determined by linear algebra computations and solver internals.

\section{Numerical results}\label{sec:num}

We detail the performance of \gls{ncl} (Algorithm~\ref{algo:nclipm}) on various NLP instances.
In \S\ref{sec:num:cutest} we present results on the CUTEst set~\cite{gould2015cutest}
with MadNCL running on the CPU with HSL MA57.
In \S\ref{sec:num:gpu} we analyze
the performance of MadNCL running on the GPU with the linear solver NVIDIA cuDSS on large-scale OPF and COPS instances.
Finally, \S\ref{sec:num:degenerate} shows
the performance of MadNCL on degenerate nonlinear programs arising from power systems.

All results on the CPU have been generated using an AMD EPYC 7443 (24-core) processor.
The benchmarks are generated on the GPU using an NVIDIA H100.
We use labels MadNCL-K2r and MadNCL-K1s to denote
MadNCL computing the Newton descent direction with \eqref{eq:K2r} and \eqref{eq:K1s} respectively.

\subsection{CUTEst benchmark}
\label{sec:num:cutest}

We start by analyzing the performance of MadNCL on problems from the CUTEst collection.
We select all instances with more than 1,000 variables and at least 1 constraint.
We compare MadNCL with Ipopt and MadNLP, 
which both
use the \lblt factorization in HSL MA57, whereas MadNCL uses HSL MA57 with a pivot threshold set to 0.0 to deactivate numerical pivoting.
Results with optimality tolerance {\tt tol=1e-8} are displayed in Figure~\ref{fig:cutest}.
We observe that MadNCL compares favorably with Ipopt and MadNLP. Despite not being
the fastest solver, MadNCL is more robust than MadNLP and Ipopt.
A closer look shows that
MadNCL can solve to optimality instances
where Ipopt and MadNLP are failing because
(i) the problems do not have enough degrees of freedom ({\tt CHAINWOONE}, {\tt NINE5D}, {\tt NINENEW}, {\tt MODBEALENE}, {\tt FIVE20B}, {\tt FIVE20C}, {\tt BDQRTICNE}, {\tt HIER163A}, {\tt HIE1327D}, {\tt HIER133E}, {\tt TWO5IN6})
(ii) the {\tt IPM} algorithm never exits feasibility restoration ({\tt BRAINPC1}, {\tt BRAINPC5}, {\tt BRAINPC9}, {\tt SAROMM}, {\tt ARTIF}), and
(iii) the filter line-search IPM  implemented in Ipopt and MadNLP performs too many primal-dual regularization ({\tt MSS3}, {\tt ORTHREGE}).

In terms of solving time, we observe that MadNCL is particularly effective
at solving the nonconvex QP instances in CUTEst: on {\tt NCVXQP7} and {\tt NCVXQP8}
it takes respectively 32s and 56s compared to 193s and 460s for Ipopt.
Similar speed-ups are reported for other nonconvex QPs ({\tt A0ESSNDL},
{\tt A0NSSSSL}, {\tt A0ENSNDL}, {\tt A0NNSNSL}, {\tt A2ESSNDL}).
However, we observe that MadNCL can be significantly slower on some specific instances
where it converges in 10 times more IPM iterations than Ipopt or MadNLP
({\tt BLOWEYA}, {\tt YAO}, {\tt CHEMRCTA}, {\tt UBH5}, {\tt SEMICON1}, {\tt POROUS1},
{\tt FERRISDC}, {\tt READING4}),
even though we warm-start on each subproblem \NCk except the first.

\begin{figure}[t] 
  \centering
  \includegraphics[width=.8\textwidth]{./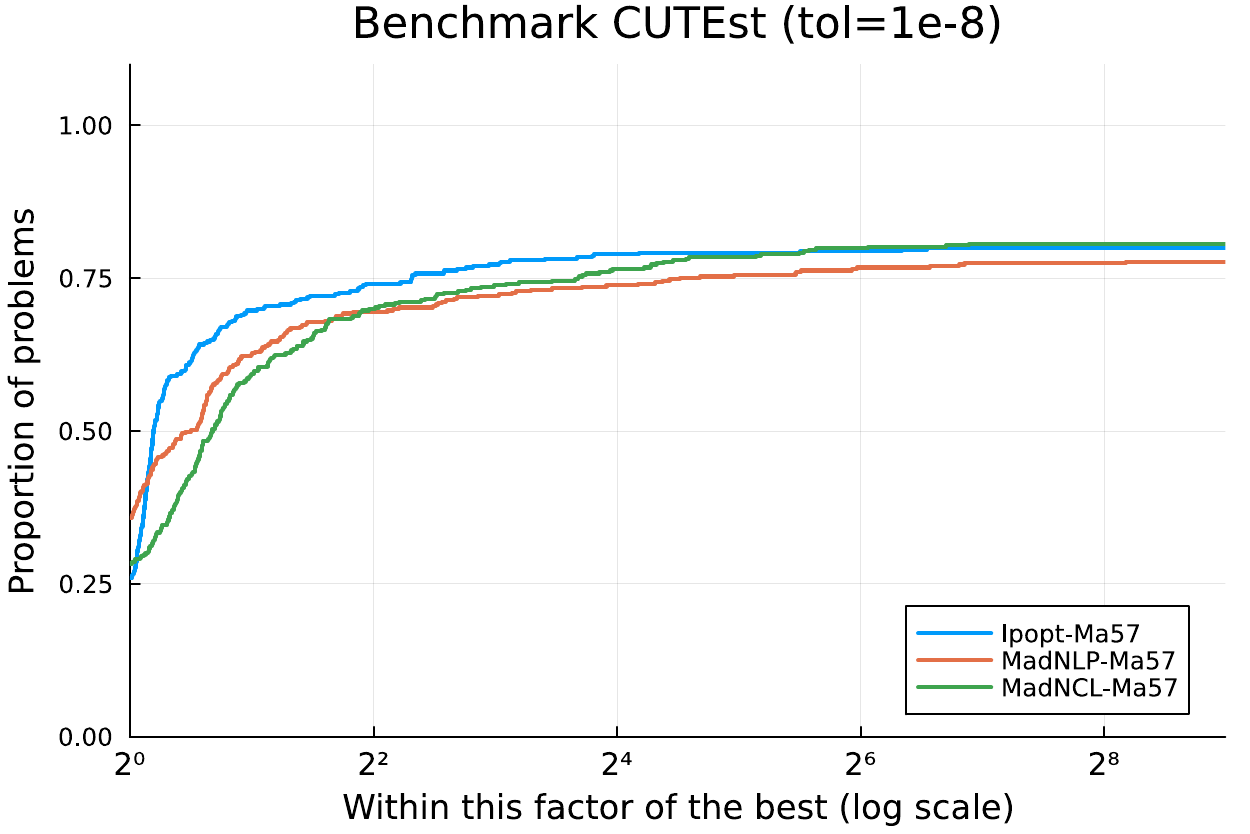}
  \caption{Results on the CUTEst benchmark (using {\tt tol=1e-8}).
    We profile the time (in seconds) to achieve optimality.
    We have selected all instances with at least 1,000 variables and 1 constraint.
    MadNCL is running on the CPU with HSL MA57.
    \label{fig:cutest}
  }
\end{figure}

\subsection{Achieving scalability on \gls{gpu}}
\label{sec:num:gpu}

The CUTEst benchmark presented in \S\ref{sec:num:cutest} runs entirely on the CPU.
In this subsection, we assess the performance achieved by MadNCL on the \gls{gpu}.
To do so, we move the models evaluation to the GPU using the modeler ExaModels.
The linear systems \eqref{eq:K2r} and \eqref{eq:K1s} are factorized on
the GPU using NVIDIA cuDSS.
The resulting algorithm runs entirely on the GPU, from the evaluation of the
derivatives to the computation of the Newton step. This avoids costly transfers
between the host and the device memory.
Our GPU-accelerated benchmark includes OPF instance from PGLIB~\cite{babaeinejadsarookolaee2019power}
and large-scale nonlinear programs from COPS~\cite{dolan2004benchmarking}.
Unfortunately, CUTEst does not support evaluating the models on the GPU, in constrast to ExaModels.

\subsubsection{Assessing MadNCL's performance on the \gls{gpu}}
\label{sec:num:gpu:opf}
We assess the speed-up obtained when MadNCL solves on the GPU the large \gls{opf} instance {\tt 78484epigrids}, compared to a CPU implementation. The instance has 674,562 variables and 1,039,062 constraints. We set the tolerance to {\tt tol=1e-8}. On the CPU we use the linear solver HSL MA27 (which is faster than MA57 for OPF problems~\cite{tasseff2019exploring}), and on the GPU we use NVIDIA cuDSS.

Results from the largest instance {\tt 78484epigrids} show that the linear
solver cuDSS is effective when using the formulation \eqref{eq:K2r}: MadNCL converges in 330 IPM iterations in
54.4s, an 18x speed-up compared to MadNCL on the CPU with HSL MA27.
For comparison, the reference method MadNLP (using HSL MA27) converges in 328 seconds to the same
solution.
The performance difference observed between CPU and \gls{gpu} is largely
explained by the faster solution time in the linear solver, with cuDSS being
effective at factorizing \eqref{eq:K2r} on the \gls{gpu}. If we report the time
per iteration, cuDSS is 17 times faster than HSL MA27 at factorizing system \eqref{eq:K2r}.

MadNCL fails to converge when using the $K_{1s}$ formulation. The algorithm struggles to address the increased ill-conditioning in the condensed KKT system~\eqref{eq:K1s} in this case. (We see in the next section that this is not always the case.)  Overall, \eqref{eq:K1s} proves to be significantly less robust than the formulation \eqref{eq:K2r}.
A closer investigation has shown that \eqref{eq:K1s} is not able to find an
accurate descent direction in the final IPM iterations (despite the iterative refinement
we are using), impairing MadNCL's convergence.
As $\rho_k$ increases, the matrix \eqref{eq:K1s} becomes too ill-conditioned
and the descent direction is not sufficiently accurate.

\begin{table}[t] 
  \caption{Performance comparison of MadNCL on a large-scale \gls{opf} instance
  using different formulations for the KKT systems. As a baseline, we give
  the time spent in MadNLP on the CPU in the first row. The columns are
  (i) {\bf flag}: return status for MadNLP (1: locally optimal, 0: failed to converge);
  (ii) {\bf it}: total number of IPM iterations;
  (iii) {\bf lin}: time spent in the linear solver (in seconds);
  (iv) {\bf total}: time spent in the solver (in seconds).
  }
  \centering
  \begin{tabular}{|lll|rrr>{\bfseries}r|}
    \toprule
  && &
  \multicolumn{4}{c|}{\bf 78484epigrids} \\
  \midrule
    {\bf solver} & KKT      & linear solver & flag & it   & lin    & total  \\
    \midrule
    MadNLP       & $K_2$    & ma27      & 1    & 104  & 312.6  & 353.9  \\
    \midrule
    MadNCL       & $K_2$    & ma27      & 1    & 322  & 1053.6 & 1182.0 \\
    MadNCL       & $K_{2r}$ & ma27      & 1    & 322  & 847.9  & 971.5  \\
    MadNCL       & $K_{2r}$ & cudss-ldl & 1    & 330  & 50.1   & 54.4   \\
    MadNCL       & $K_{1s}$ & ma27      & 0    & 1000 & 3222.2 & 3848.6 \\
    MadNCL       & $K_{1s}$ & cudss-ldl & 0    & 1000 & 154.1  & 170.2  \\
    \bottomrule
  \end{tabular}
\end{table}

\subsubsection{Comparing MadNCL with other GPU-accelerated solvers}

In this section, we present our principal benchmark on
large-scale \gls{opf} and COPS instances~\cite{babaeinejadsarookolaee2019power,dolan2004benchmarking}:
we compare MadNCL running on the GPU with
MadNLP (running on the CPU using HSL MA27, and on the GPU using the two GPU-accelerated KKT linear solvers
HyKKT and LiftedKKT \cite{pacaud2024condensed}).
The results are presented in Table~\ref{tab:benchgpu}. We observe that
MadNCL exhibits better performance on the COPS instances than on the OPF ones.

The OPF instances have very sparse Jacobians.
We observe that MadNCL-K2r is able to solve
all cases except {\tt 10480\_goc}.
This is much better than MadNCL-K1s, which fails on all instances.
MadNCL-K2r requires more IPM iterations than MadNLP, as expected
for an Augmented Lagrangian method (superlinear convergence is achieved
only when we enter the extrapolation step). However, the GPU-acceleration benefits MadNCL-K2r, which is overall faster than MadNLP running on the CPU with HSL MA27:
MadNCL-K2r achieves a 6x speed-up on the largest instance {\tt 78484\_epigrids}.
On the other hand, MadNCL-K2r remains slower than MadNLP running on the GPU, with LiftedKKT being the
fastest method here.

Results on the COPS instances are different from the OPF instances. In contrast to what we observed earlier, both MadNCL-K2r and MadNCL-K1s exhibit
reliable convergence and are significantly faster than MadNLP on the CPU
with HSL MA27. MadNCL-K1s is here the fastest NCL variant, and compares favorably w.r.t.\ MadNLP running on the GPU
with HyKKT and LiftedKKT. In contrast to the OPF instances, the number
of IPM iterations in MadNCL does not increase significantly compared to MadNLP.
The failure observed on the {\tt rocket} instance is noteworthy, as this problem is known to exhibit a specific type of degeneracy: the reduced Hessian is nearly singular at optimality.
In that situation, we have no guarantee that the Augmented Lagrangian method used
in MadNCL would converge~\cite{fernandez2012local}.
This clarifies that the issue arises from a different form of degeneracy than that handled by NCL.

\begin{table}[t]
\caption{Comparing the performance of MadNLP and MadNCL on the \gls{gpu} for large-scale
  \gls{opf} instances from PGLIB~\cite{babaeinejadsarookolaee2019power} and large-scale COPS instances~\cite{dolan2004benchmarking}.
  The tolerance is {\tt tol=1e-8}.
  The columns are:
  (i) {\bf flag}: return status for MadNLP (1: locally optimal, 2: solved to acceptable level, 0: failed to converge);
  (ii) {\bf it}: total number of IPM iterations;
  (iii) {\bf lin}: time spent in the linear solver (in seconds);
  (iv) {\bf total}: time spent in the solver (in seconds).}
  \label{tab:benchgpu}
  \centering
  \resizebox{\textwidth}{!}{
    \begin{tabular}{|l|rrr >{\bfseries}r|rrr >{\bfseries}r|rrr >{\bfseries}r|rrr >{\bfseries}r|rrr >{\bfseries}r|}
      \toprule
      & \multicolumn{4}{c|}{\bf MadNLP-K2-Ma27} &
      \multicolumn{4}{c|}{\bf MadNLP-HyKKT-cuDSS} &
      \multicolumn{4}{c|}{\bf MadNLP-LiftedKKT-cuDSS} &
      \multicolumn{4}{c|}{\bf MadNCL-K2r-cuDSS} &
      \multicolumn{4}{c|}{\bf MadNCL-K1s-cuDSS} \\
      \midrule
      \textbf{case} & \textbf{flag} & \textbf{it} & \textbf{lin} & \textbf{total} & \textbf{flag} & \textbf{it} & \textbf{lin} & \textbf{total} & \textbf{flag} & \textbf{it} & \textbf{lin} & \textbf{total} & \textbf{flag} & \textbf{it} & \textbf{lin} & \textbf{total} & \textbf{flag} & \textbf{it} & \textbf{lin} & \textbf{total} \\
      \midrule
      9241\_pegase    & 1 & 75   & 7.04    & 9.77    & 1  & 72   & 0.88   & 1.75   & 0 & 1000 & 11.85 & 22.50 & 1 & 301  & 6.25  & 8.29  & 0 & 835  & 43.81  & 51.91  \\
      9591\_goc       & 1 & 68   & 12.92   & 15.52   & 1  & 194  & 1.75   & 3.88   & 1 & 85   & 1.50  & 2.33  & 1 & 89   & 1.73  & 2.74  & 0 & 1000 & 47.00  & 57.76  \\
      10000\_goc      & 1 & 85   & 7.37    & 10.20   & 1  & 85   & 0.58   & 1.35   & 1 & 65   & 0.73  & 1.33  & 1 & 108  & 1.65  & 2.43  & 0 & 1000 & 27.36  & 35.36  \\
      10192\_epigrids & 1 & 56   & 9.88    & 12.78   & 2  & 69   & 0.74   & 2.15   & 1 & 58   & 1.28  & 2.15  & 1 & 143  & 2.37  & 3.37  & 0 & 1000 & 71.93  & 81.12  \\
      10480\_goc      & 1 & 72   & 14.21   & 17.52   & 1  & 340  & 4.42   & 7.92   & 1 & 68   & 0.91  & 1.75  & 0 & 1000 & 40.63 & 48.21 & 0 & 1000 & 62.12  & 71.12  \\
      13659\_pegase   & 1 & 65   & 12.48   & 16.44   & 1  & 64   & 0.70   & 1.75   & 1 & 72   & 0.85  & 1.89  & 1 & 254  & 6.18  & 8.20  & 0 & 1000 & 35.31  & 44.17  \\
      19402\_goc      & 1 & 72   & 54.93   & 62.31   & 1  & 541  & 9.26   & 14.71  & 1 & 72   & 1.21  & 3.12  & 1 & 163  & 5.97  & 7.17  & 0 & 657  & 55.65  & 63.24  \\
      20758\_epigrids & 1 & 53   & 27.81   & 33.31   & 1  & 55   & 0.88   & 2.78   & 0 & 1000 & 26.26 & 34.82 & 1 & 284  & 12.85 & 15.04 & 0 & 1000 & 93.64  & 104.63 \\
      24464\_goc      & 1 & 64   & 40.26   & 48.12   & 2  & 609  & 13.71  & 20.39  & 1 & 65   & 1.05  & 3.03  & 1 & 554  & 32.80 & 36.87 & 0 & 945  & 58.38  & 68.17  \\
      30000\_goc      & 1 & 229  & 159.86  & 182.19  & 1  & 231  & 3.76   & 5.92   & 1 & 153  & 2.36  & 3.91  & 1 & 343  & 19.34 & 22.06 & 0 & 1000 & 61.73  & 72.21  \\
      78484\_epigrids & 1 & 104  & 312.58  & 353.85  & 0  & 1000 & 42.39  & 55.47  & 1 & 105  & 4.58  & 10.10 & 1 & 330  & 50.10 & 54.39 & 0 & 1000 & 154.14 & 170.23 \\
      \midrule
      bearing         & 1 & 18   & 13.35   & 22.61   & 1  & 18   & 0.76   & 3.55   & 1 & 15   & 0.12  & 2.86  & 1 & 11   & 0.18  & 3.36  & 1 & 11   & 0.16   & 1.41   \\
      catmix          & 1 & 20   & 1.30    & 7.44    & 1  & 18   & 0.05   & 2.39   & 1 & 26   & 0.18  & 2.51  & 1 & 114  & 0.70  & 3.75  & 1 & 39   & 0.19   & 1.58   \\
      channel         & 1 & 6    & 1.09    & 7.29    & 1  & 6    & 0.03   & 2.74   & 1 & 7    & 0.03  & 2.61  & 1 & 24   & 0.28  & 1.90  & 1 & 21   & 0.14   & 1.36   \\
      elec            & 1 & 209  & 98.03   & 125.83  & 1  & 103  & 0.69   & 4.26   & 1 & 237  & 1.71  & 7.02  & 1 & 199  & 3.33  & 8.01  & 1 & 141  & 1.00   & 3.07   \\
      gasoil          & 1 & 20   & 1.47    & 16.90   & 1  & 21   & 0.13   & 3.19   & 1 & 43   & 0.86  & 4.93  & 1 & 18   & 0.28  & 2.81  & 1 & 18   & 0.14   & 2.11   \\
      marine          & 1 & 14   & 2.19    & 9.45    & 1  & 11   & 0.07   & 2.81   & 1 & 25   & 0.35  & 3.14  & 1 & 22   & 0.23  & 1.97  & 1 & 22   & 0.15   & 1.59   \\
      pinene          & 1 & 12   & 2.40    & 10.60   & 1  & 13   & 0.15   & 1.13   & 1 & 19   & 0.27  & 1.38  & 1 & 51   & 0.74  & 0.98  & 1 & 51   & 0.41   & 1.53   \\
      polygon         & 1 & 32   & 0.02    & 0.03    & 1  & 31   & 0.06   & 0.20   & 1 & 189  & 0.30  & 1.03  & 1 & 67   & 0.17  & 0.53  & 1 & 67   & 0.25   & 0.58   \\
      robot           & 1 & 31   & 1.52    & 11.81   & 1  & 87   & 162.30 & 167.62 & 1 & 26   & 0.06  & 3.35  & 1 & 53   & 0.30  & 6.49  & 1 & 67   & 0.36   & 3.68   \\
      rocket          & 1 & 75   & 1.11    & 10.56   & 1  & 183  & 21.27  & 27.10  & 1 & 111  & 0.32  & 4.46  & 0 & 140  & 0.53  & 4.93  & 1 & 183  & 0.42   & 3.66   \\
      steering        & 1 & 17   & 0.18    & 7.79    & 1  & 16   & 0.04   & 2.56   & 1 & 14   & 0.06  & 2.43  & 1 & 15   & 0.07  & 3.11  & 1 & 15   & 0.06   & 1.69   \\
      torsion         & 1 & 14   & 0.74    & 0.96    & 1  & 14   & 0.08   & 0.29   & 1 & 15   & 0.05  & 0.25  & 1 & 12   & 0.09  & 0.16  & 1 & 12   & 0.10   & 0.17   \\
      \bottomrule
    \end{tabular}
  }
\end{table}

\subsection{\glspl{nlp} with degenerate constraints}
\label{sec:num:degenerate}

The previous benchmark in \S\ref{sec:num:gpu} focused  on solving regular nonlinear programs. Now
we assess how MadNCL performs on degenerate \glspl{nlp}.
We consider practical degenerate instances arising in power system
applications: the security-constrained optimal power flow (SCOPF),
here formulated as mathematical programs with complementarity constraints (MPCC).
The detailed formulation can be found in \cite{aravena2023recent}.
Here, the problem associates each contingency scenario with a given line failure.
The total number of contingencies is denoted by $n_K$: the number of complementarity
constraints increases linearly w.r.t.\ $n_K$, rendering the problem more difficult to solve
for large $n_K$.
By nature, MPCCs are degenerate: the Mangasarian-Fromovitz constraint qualification
(MFCQ) is violated at every feasible point.
Despite this degeneracy, it has been proven that the Augmented Lagrangian method can converge (globally)
to a strongly stationary solution, provided a regularity condition holds at the solution~\cite{izmailov2012global}.
The SCOPF instances here are large-scale and degenerate, offering a good use
case to test MadNCL's performance. As these instances are harder to solve, we relax
the tolerance to {\tt tol=1e-5}.

Classical nonlinear IPM solvers like Ipopt and MadNLP
fail to solve the SCOPF instances to optimality: both solvers suffer from the lack of problem regularity
and they converge to an infeasible solution after entering feasibility restoration.
In contrast, MadNCL converges reliably despite
the ill-conditioning of linear systems \eqref{eq:K2r} and \eqref{eq:K1s}.
The results are displayed in Table~\ref{tab:scopf}.
MadNCL-K2r is able to solve all instances to {\tt tol=1e-5} on the CPU (using HSL MA57) and on the GPU (using cuDSS).
As in \S\ref{sec:num:gpu:opf}, we observe that MadNCL-K2r is more robust than MadNCL-K1s, which suffers from the increased ill-conditioning of system \eqref{eq:K1s}.
In addition, MadNCL-K2r is significantly faster when using GPU-acceleration, with up to
a 10x speed-up on the largest instances.

\begin{table}[t]
\caption{Comparing the performance of MadNCL on large-scale SCOPF instances.
  The tolerance is {\tt tol=1e-5}.
  Column $n_K$ shows the number of contingencies used in the SCOPF.
  Columns $n$ and $m$ show the number of variables and constraints.
  The other columns are
  (i) {\bf flag}: return status for MadNLP (1: locally optimal, 2: solved to acceptable level, 0: failed to converge);
  (ii) {\bf it}: total number of IPM iterations;
  (iii) {\bf lin}: time spent in the linear solver (in seconds);
  (iv) {\bf total}: time spent in the solver (in seconds).}
  \label{tab:scopf}
  \centering
  \resizebox{\textwidth}{!}{
    \begin{tabular}{|lrrr|rrr >{\bfseries}r|rrr >{\bfseries}r|rrr >{\bfseries}r|rrr >{\bfseries}r|}
      \toprule
      & &&& \multicolumn{4}{c|}{\bf MadNCL-K2r-Ma57} & \multicolumn{4}{c|}{\bf MadNCL-K1s-MA57} & \multicolumn{4}{c|}{\bf MadNCL-K2r-cuDSS} & \multicolumn{4}{c|}{\bf MadNCL-K1s-cuDSS} \\
      \textbf{case} & $n_K$ & $n$ & $m$ &\textbf{flag} & \textbf{it} & \textbf{lin} & \textbf{total} & \textbf{flag} & \textbf{it} & \textbf{lin} & \textbf{total} & \textbf{flag} & \textbf{it} & \textbf{lin} & \textbf{total} & \textbf{flag} & \textbf{it} & \textbf{lin} & \textbf{total} \\
      \midrule
      118        & 100 & 131588 & 168853 & 1 & 25  & 3.25   & 7.90   & 1 & 25  & 2.79   & 8.90   & 1 & 25  & 0.62  & 0.89  & 1 & 25  & 1.54  & 5.08  \\
      300        & 100 & 268282 & 350967 & 1 & 49  & 13.23  & 19.44  & 1 & 49  & 9.94   & 16.15  & 1 & 49  & 1.95  & 2.64  & 1 & 49  & 1.29  & 1.75  \\
      ACTIVSg200 & 100 & 162356 & 211571 & 1 & 31  & 8.63   & 14.17  & 1 & 31  & 6.11   & 11.20  & 1 & 33  & 1.19  & 3.63  & 1 & 33  & 0.71  & 2.43  \\
      1354pegase & 8 & 109056 & 144327 & 1 & 42  & 13.38  & 16.44  & 1 & 42  & 11.51  & 16.43  & 1 & 45  & 1.41  & 3.62  & 0 & 250 & 12.94 & 17.08 \\
      1354pegase & 16 & 206920 & 273999 & 1 & 42  & 31.83  & 36.76  & 1 & 42  & 31.82  & 37.20  & 1 & 46  & 2.90  & 3.29  & 1 & 42  & 1.90  & 2.28  \\
      1354pegase & 32 & 402648 & 533343 & 1 & 165 & 202.25 & 234.95 & 0 & 250 & 447.04 & 499.25 & 1 & 218 & 54.42 & 56.74 & 1 & 248 & 35.51 & 39.86 \\
      2869pegase & 8 & 242102 & 323479 & 1 & 45  & 25.60  & 31.32  & 0 & 250 & 182.80 & 212.34 & 1 & 45  & 3.80  & 4.22  & 0 & 250 & 25.95 & 28.82 \\
      2869pegase & 16 & 459118 & 613727 & 1 & 48  & 63.34  & 75.21  & 1 & 56  & 85.30  & 98.42  & 1 & 50  & 8.32  & 8.90  & 0 & 250 & 52.15 & 55.71 \\
      \bottomrule
    \end{tabular}
  }
\end{table}

\section{Conclusions and future work}\label{sec:conc}
We have explored a \gls{gpu} implementation of Algorithm \gls{ncl} for solving large-scale NLP problems,
especially ones whose constraints fail LICQ at a solution.
NCL's need for an IPM subproblem solver is provided by MadNLP.
We focused on some
limitations of existing \gls{gpu}-accelerated NLP solvers.
We demonstrated that problem structure within MadNLP can be leveraged at the linear algebra level by introducing the stabilized KKT system \ref{eq:K2r} and the condensed KKT system \ref{eq:K1s}, which facilitate efficient linear algebra computation on \gls{gpu}s by utilizing MadNLP's flexible abstraction of \gls{kkt} systems.
From a computational standpoint, we have established the first \gls{gpu}-friendly augmented system that
avoids explicit formation of a Schur complement, offering significant advantages for large problems while remaining general enough to address a broad class of problems.
Extensive numerical experiments have validated the performance of MadNCL on \gls{gpu}s, showcasing it as a reliable solver for 
optimization problems
regardless of their degeneracy.

MadNCL holds great potential for future research in addressing challenges such as
failure of LICQ at a solution,
absence of strict complementarity,
or the presence of complementarity constraints (MPCC)
as in the SCOPF examples studied
in \S\ref{sec:num:degenerate}.
Incorporating outer/inner loop structures inspired by \gls{ncl} into existing frameworks like Superb~\cite{gould2015interior} could broaden the applicability of this approach.


\small

\normalsize

\end{document}